\documentclass[a4paper,11pt]{article}
\usepackage[T2A]{fontenc}
\usepackage[utf8]{inputenc}
\usepackage{amsmath,amssymb,amsthm}
\usepackage{amsfonts}
\usepackage{geometry}
\usepackage{url}
\usepackage[colorlinks, linkcolor=blue, citecolor=blue]{hyperref}

\theoremstyle{plain}
\newtheorem{theorem}{Theorem}[section]

\theoremstyle{definition}
\newtheorem{definition}{Definition}[section]

\theoremstyle{remark}
\newtheorem{remark}{Remark}[section]

\newtheorem{problem}{Problem}[section]

\newcommand{\Z}{\mathbb{Z}}

\title{Generalizations of the groups $G_{n}^{k}$: graphs, moduli spaces, algebraic geometry, spherical braids}
\author{V.~O.~Manturov\\
Moscow Institute of Physics and Technology (MIPT)}
\date{}

\begin{document}

\maketitle

\begin{center}
\textbf{UDC~515.162}
\end{center}

\begin{center}
\textbf{Keywords:} knot,braid group, 
configuration space, moduli space,
monodromy, spherical braid, pure braids, fundamental group, hypergraph, $G_n^k$-theory,elliptic curve.
\end{center}

\begin{center}
\textbf{Abstract}
\end{center}
In this work, we construct a generalization of the $G_n^k$-theory to the case of an arbitrary hypergraph. The case of spherical braids is considered separately, using the stratification of the moduli space $\mathcal{M}_n(S^2)$ and the hypergraph $\Gamma_n^{\mathrm{sph}}$ encoding projective constraints. In contrast to the original $G_{n}^{k}$ theory, where codimension-one properties are determined by exactly $k$ particles, the present work considers various cases corresponding to strata of codimension~$1$. 
These groups admit nice maps to free products of cyclic groups.

Among unsolved problems, we emphasize
the question how the above construction
works for abelian varieties and,
in particular, for elliptic curves.

\section{Introduction}

The main principle of the $G_n^k$-theory is as follows: 

{\em if a dynamical system describing the motion of $n$ particles admits a general-position property of codimension~$1$ depending on exactly $k$ particles, then this system has invariants taking values in the group $G_{n}^{k}$.}

This principle applies to various configuration spaces and moduli spaces in which the particles are ``equal'' and codimension~$1$ properties are determined by exactly $k$ of them. In the present work, this principle is generalized in several directions.

\begin{itemize}
    \item The most general case is that of an arbitrary hypergraph $\Gamma$, which makes it possible to cover situations where different sets of particles participate in codimension~$1$ conditions in different numbers.
    \item In the case of ``equal'' particles, graphs with a natural action of the symmetric group arise.
\end{itemize}

As special cases, we obtain the hypergraph $\Gamma_n^{\mathrm{sph}}$ and a homomorphism from the spherical pure braid group to the corresponding spherical analogue of the group $G_{n}^{3}$, as well as a new stratification of the ordinary configuration space of points in the plane, finer than that corresponding to $G_{n}^{3}$. This stratification yields a homomorphism from the braid group to the group $G_{n}^{6,3}$.

The approach to constructing invariants via stratification and monodromy has deep roots in the classical works of Arnold~\cite{arnold1969} and Birman~\cite{birman1975}, where the topology of configuration spaces and braid groups was first systematically studied. In particular, Arnold's computation of the cohomology ring of the colored braid group~\cite{arnold1969} and Birman's foundational monograph~\cite{birman1975} established the connection between braid groups and the topology of configuration spaces that underlies the present work.

Seemingly, this approach should lead
to lots of interesting graphs and
groups describing algebraic varieties
and various sorts of braids in them.

Elliptic curves are the first natural
case to be studied next.

\subsection{Acknowledgements}

I am very grateful to Igor Nikonov
and Matvey Sergeev for useful comments.

The author was supported by the grant 25-21-00884 Applied combinatorial geometry and topology.

\section{The group $G(\Gamma)$ for an arbitrary hypergraph}

Let $\Gamma = (V, E)$ be a hypergraph, where $V$ is the set of vertices and $E$ is the set of hyperedges. We define the group $G(\Gamma)$ as the group given by generators and relations as follows.

For each vertex $v \in V$, we introduce a generator $g_v$. The set of generators is $\{ g_v \mid v \in V \}$, and we consider the relations:
\begin{enumerate}
    \item \textbf{Involutivity of generators.} For each vertex $v \in V$, we have
    \[
        g_v^2 = 1.
    \]

    \item \textbf{Hyperedge relations.} For each hyperedge $e \in E$ containing $k = |e|$ vertices, and for any permutation $(v_1, v_2, \dots, v_k)$ of the vertices of $e$, we have
    \[
        (g_{v_1} g_{v_2} \cdots g_{v_k})^2 = 1.
    \]
\end{enumerate}

Thus, the group $G(\Gamma)$ is given by the presentation
\begin{multline*}
    G(\Gamma) = \Bigl\langle \{ g_v \}_{v \in V} \;\Bigm|\;
    g_v^2 = 1\ (\forall v \in V), \\
    (g_{v_1} \cdots g_{v_k})^2 = 1\ (\forall e \in E,\ \forall (v_1,\dots,v_k) \text{ a permutation of the vertices of } e)
    \Bigr\rangle.
\end{multline*}

\subsection{On the mapping of the group $G(\Gamma)$ into the free product of groups~$\mathbb{Z}_2$}

One of the reasons why the
above groups are interesting is that that they admit easy
maps to free products of groups ${\mathbb Z}_{2}$, hence one can easily
extract powerful and
easy-to-calculate invariants
of braids and moduli spaces
hence such groups have exponential
growth and trivially solvable
word and conjugacy problems.

Let $G=(V,B)$ be a graph;
let $({\mathbb Z}_2)^{|V|}$ be the space
over ${\mathbb Z}_{2}$ generated
by all vertices $V$.
Let a word~$w$ composed of generators~$g_v$ of the group $G$ be given. Fix a vertex~$v$ of the graph. To each occurrence of the letter~$g_v$ in the word~$w$ we assign some index~$i$, which is an element of the vector space~$V(g_v, G(\Gamma))$ over the field~$\mathbb{Z}_2$.

To the word~$w$ we assign the product of elements of the form $g_v^i$, taken over all indices of occurrences of the letter~$g_v$; denote this product by~$A(w)$. In other words, when constructing~$A(w)$ we ignore all generators different from~$g_v$, and to each occurrence of~$g_v$ we attach the corresponding index.

Our goal is to construct the linear spaces~$V(g_v, G(\Gamma))$ in such a way that the element~$A(w)$ is well-defined in the free product of groups~$\mathbb{Z}_2$ indexed by elements of the space~$V(g_v, G(\Gamma))$. This means that for any words $w, w' \in G(\Gamma)$ representing the same group element, the corresponding elements $A(w)$ and $A(w')$ must coincide in the said free product.

To achieve this goal, we require that for each relation $R_{j}=1$ corresponding to a hyperedge of the graph~$\Gamma$, both occurrences of the generator~$g_v$ (within that relation) have the same index.

We describe the general construction of the spaces~$V(g_v, G(\Gamma))$. Consider the space of dimension~$|V|$ over~$\mathbb{Z}_2$ (which can be identified with~$\mathbb{Z}_2^{|V|}$), where each basis vector is assigned to a vertex of the graph $G(\Gamma)$. For each vertex~$v$, the space~$V(g_v, G(\Gamma))$ is defined as the quotient of the said space ${\mathbb {Z}}_{2}^{|V|}$ by the vector corresponding to $v$ itself and the subspace generated by the vectors corresponding to hyperedges
(relations $R_{j}$) containing the vertex~$v$. If $i$ is the corresponding element in the $V(g_{v},G(\Gamma))$,then we take $i$ to be the index for this occurency and write $g_{v}^{i}$.

Let $G(v, G(\Gamma))$ be
the group generated by
letters $g_{v}^{j}$ where
$j$ runs over $V(g_{v},G(\Gamma))$
modulo relations $(g_{v}^{j})^{2}=1$
(this is a free product of 
groups $\mathbb{Z}_{2}$).

\begin{theorem}
For each $v\in V(G(\Gamma))$ the map $Ind:
G(\Gamma)\to  G(v, G(\Gamma))$ taking $w$ to the product of
$g_{v}^{j}$, is well defined.
\end{theorem}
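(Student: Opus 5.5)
The plan is to make the index explicit and then check that $Ind$ is unchanged under every elementary transformation of words allowed by the presentation of $G(\Gamma)$. The index of an occurrence of $g_v$ in a word $w$ will be the element of $V(g_v,G(\Gamma))$ given as follows. Take the sum in $\mathbb{Z}_2^{|V|}$ of the basis vectors $e_u$ over all letters $g_u$ that precede this occurrence in $w$. Then reduce it modulo $e_v$ and modulo the vectors $e(h)=\sum_{u\in h}e_u$ of the hyperedges $h\ni v$. This matches the description before the theorem: the index is read off from the parities of the generators standing before the occurrence. Two words represent the same element of $G(\Gamma)$ iff they are related by finitely many insertions and deletions of $g_ug_u$ and of the hyperedge relators $(g_{v_1}\cdots g_{v_k})^2$. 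So it suffices to show that one such insertion does not change $A(w)$ in $G(v,G(\Gamma))$.

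\textbf{Step 1: later letters are unaffected.} Each of the inserted words contains every generator an even number of times. Hence the prefix sum at every letter after the insertion point changes by an even vector, which is $0$ over $\mathbb{Z}_2$. Letters before the insertion point are also untouched. Therefore all old occurrences of $g_v$ keep their indices, and only the newly inserted occurrences of $g_v$ need attention.

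\textbf{Step 2: insertion of $g_ug_u$.} If $u\neq v$, no new $g_v$ appears, so $A(w)$ does not change. If $u=v$, two occurrences of $g_v$ appear next to each other. Their indices differ by $e_v$, which is $0$ in $V(g_v,G(\Gamma))$, so both carry the same index $j$. In $A(w)$ they form the factor $g_v^jg_v^j=(g_v^j)^2=1$, which cancels.

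\textbf{Step 3: insertion of a hyperedge relator.} Let the inserted word be $(g_{v_1}\cdots g_{v_k})^2$ with $\{v_1,\dots,v_k\}=h\in E$. If $v\notin h$, no $g_v$ is inserted and nothing changes. If $v=v_i\in h$, exactly two occurrences of $g_v$ are inserted, one in each copy of the block. The letters strictly between them are $g_{v_{i+1}},\dots,g_{v_k},g_{v_1},\dots,g_{v_{i-1}}$, that is, every vertex of $h$ except $v$, each once. Counting also the first $g_v$ itself, which precedes the second one, the two indices differ by $e(h)$, and $e(h)=0$ in the quotient. So both occurrences again have a common index $j$. There is no other $g_v$ between them, so in $A(w)$ they are adjacent and give $(g_v^j)^2=1$. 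This is exactly the requirement stated before the theorem, that both occurrences of $g_v$ in a relator receive the same index; the quotient by $e(h)$ for $h\ni v$ and by $e_v$ was chosen to guarantee it.

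I expect the main care point to be Step 3: the bookkeeping of which letters lie between the two occurrences of $g_v$. One must confirm that the discrepancy is precisely $e(h)$ (or $e(h)-e_v$ under the other convention, where the occurrence itself is not counted), and that this holds for every permutation $(v_1,\dots,v_k)$ and every position $i$. Once that is checked, deletions follow by symmetry, and $Ind$ descends to a well-defined map $G(\Gamma)\to G(v,G(\Gamma))$.
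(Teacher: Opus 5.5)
Your proof is correct and follows the same line as the paper. The paper gives no separate argument beyond the design requirement that both occurrences of $g_v$ in each relator receive the same index, which is guaranteed by quotienting by $e_v$ and by the hyperedge vectors $e(h)$, $h\ni v$. Your explicit prefix-parity definition of the index, and your check that inserting a relator shifts later prefix sums by an even (hence zero) vector, supply exactly the details the paper leaves implicit.
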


As an illustration, consider the example of the group~$G_n^3$. In this case we have generators $a_{ijk}$, and each generator $a_{ijk}$ commutes with all generators $a_{pqr}$ having less than two common indices with it. 
Such generators are factored out
one-by-one in the space
$V(g_{a_ijk},G)$.
The remaining generators come from the relations of the form $(a_{ijk} a_{pqr} a_{ijk} a_{pqr})^{2} = 1$.

Let us turn to the construction of the quotient space. The original space $V$ is factored by the one-dimensional subspaces corresponding to index triples $pqr$ that satisfy the above commutation condition with a fixed triple $ijk$, and also by the subspace corresponding to the triple $ijk$ itself.

After such a factorisation, only generators corresponding to triples of the form $ijl$, $ikl$, $jkl$, where $l \neq i,j,k$, remain in the quotient space. Factoring by these directions leads to a decomposition of the space into a direct sum of two-dimensional subspaces, one for each index $l$ different from $i$, $j$, $k$.

The resulting structure coincides exactly with the set of MN-indices described in the work \cite{manturovnikonov2015}.

\section{Main theorem on homomorphisms from fundamental groups of moduli spaces to groups $G(\Gamma)$}

Let $\mathcal{M}$ be a moduli space equipped with a stratification in which codimension~$1$ strata correspond to vertices of the hypergraph $\Gamma$, and codimension~$2$ strata correspond to hyperedges of $\Gamma$. Moreover, in a neighbourhood of each codimension~$2$ stratum corresponding to a set of vertices $i_{1},\dots, i_{m}$, each of these vertices occurs 

{\em
exactly twice and from opposite sides.
}

The following theorem holds.

\begin{theorem}
There exists a natural homomorphism from the fundamental group of the moduli space $\pi_1(\mathcal{M}, *)$ to the group $G(\Gamma)$.
\end{theorem}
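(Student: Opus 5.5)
We construct the homomorphism by the standard "transversality and monodromy" argument used for $G_n^k$. Fix a base point $*$ in the open (top-dimensional) stratum of $\mathcal{M}$. By general position, every loop at $*$ can be homotoped to a smooth loop $\gamma$ that avoids all strata of codimension $\ge 2$. Such a loop meets the codimension-$1$ strata transversally in finitely many points. Reading these intersection points in the order of traversal gives a word $w(\gamma)=g_{v_1}g_{v_2}\cdots g_{v_N}$, where $v_j$ is the vertex of $\Gamma$ labelling the stratum crossed at the $j$-th moment. We define $\phi([\gamma])$ to be the image of $w(\gamma)$ in $G(\Gamma)$. Products of loops go to concatenations of words, so $\phi$ is multiplicative once it is well defined.

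The main step is invariance under homotopy. Let $\gamma_0,\gamma_1$ be homotopic generic loops. We put the homotopy $H\colon S^1\times[0,1]\to\mathcal{M}$ in general position with respect to the stratification. It then avoids strata of codimension $\ge 3$, meets codimension-$2$ strata in isolated points, and meets codimension-$1$ strata transversally away from these points. The homotopy is then a finite sequence of elementary moves, and we check each one.

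\begin{itemize}
\item Isotopies through generic loops do not change the word.
\item A loop becomes tangent to a codimension-$1$ stratum $v$ and two crossings are born or die. This inserts or deletes $g_vg_v$, which is trivial by the involutivity relation $g_v^2=1$.
\item The loop passes through a codimension-$2$ stratum corresponding to a hyperedge $e$. This is the heart of the proof.
\end{itemize}

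For the last move, take a small disc transversal to the codimension-$2$ stratum. Its boundary circle meets the codimension-$1$ strata adjacent to it. By hypothesis, each vertex of $e$ is met exactly twice, from opposite sides. Going around the circle therefore produces a cyclic word in which each $g_{v_i}$, $v_i\in e$, occurs twice. We need to see that this word is of the form $(g_{v_1}\cdots g_{v_k})^2$ for some ordering of the vertices of $e$. The "opposite sides" condition says the local picture is centrally symmetric: the two walls of each stratum $v_i$ are antipodal on the small circle, like $k$ lines through a point in a plane. So the cyclic order is $v_{\sigma(1)},\dots,v_{\sigma(k)},v_{\sigma(1)},\dots,v_{\sigma(k)}$. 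Passing the loop across the codimension-$2$ point replaces one arc of this circle by the complementary arc. The word therefore changes by an insertion of the boundary word, up to cyclic shift and inversion. Cyclic shifts and inverses of $(g_{v_{\sigma(1)}}\cdots g_{v_{\sigma(k)}})^2$ are again relations of the same type for a different permutation, since all the $g_v$ are involutions. Since the defining relations of $G(\Gamma)$ cover all permutations, every such boundary word is trivial, and the word class is unchanged.

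I expect the main obstacle to be this local analysis at codimension-$2$ strata. One must make precise that "each vertex occurs exactly twice and from opposite sides" really forces the link word to be a square of a product of all the generators in some order, not merely an arbitrary word with each letter appearing twice. I would extract it from the transversal local model, where the strata near a codimension-$2$ point look like a union of hyperplanes through a codimension-$2$ linear space. One also has to handle base-point independence, where conjugation is harmless, and to note that naturality follows because the construction uses only the stratification.
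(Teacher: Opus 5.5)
Your proposal is correct and follows essentially the same route as the paper: put loops in transverse position, read off the word of wall-crossings, and check that tangency moves give $g_v^2=1$ while passing through a codimension-$2$ stratum gives the hyperedge relation $(g_{v_1}\cdots g_{v_k})^2=1$. At the codimension-$2$ step you are more explicit than the paper, which only asserts that ``a more detailed analysis of the local monodromy'' yields this relation; moreover, your antipodality reading of ``opposite sides'' needs no linear local model, because if on the link circle the two walls of each $u\in e$ separate the two walls of every other $v\in e$, then a count shows the two occurrences of each letter are exactly $k$ apart, which forces the cyclic word $(g_{v_{\sigma(1)}}\cdots g_{v_{\sigma(k)}})^2$.
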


\begin{proof}
Fix a basepoint $* \in \mathcal{M}$ lying in the top-dimensional stratum. Consider an arbitrary loop $\gamma\colon [0,1] \to \mathcal{M}$ such that $\gamma(0) = \gamma(1) = *$.

\medskip
\textbf{Step 1. Reduction of the loop to transverse form.}

Using standard transversality methods in stratified spaces, one can show that any loop $\gamma$ is homotopic to a loop $\gamma'$ with the following properties:
\begin{itemize}
    \item $\gamma'$ intersects codimension~$1$ strata transversely;
    \item $\gamma'$ has no tangencies with strata of codimension $\ge 3$;
    \item $\gamma'$ intersects only finitely many codimension~$1$ strata.
\end{itemize}
This follows from the fact that the set of loops transverse to a given stratification is dense in the loop space with respect to the compact-open topology.

\medskip
\textbf{Step 2. Associating generators.}

Each intersection of $\gamma'$ with a codimension~$1$ stratum corresponding to a vertex $v \in V$ determines an occurrence of the generator $g_v$ in the word representing the class of the loop in $\pi_1(\mathcal{M},*)$. Since the loop is closed and the intersection is transverse, each such intersection can be assigned a sign (direction of entry/exit). We fix the rule: an intersection ``in the direction of the stratification'' gives $g_v$, and the opposite one gives $g_v^{-1}$. However, by involutivity of the generators ($g_v^2=1$), we have $g_v = g_v^{-1}$, and the order of writing is determined by the sequence of intersections along the loop.

Thus, the loop $\gamma'$ is assigned the word $w(\gamma') = g_{v_{i_1}} g_{v_{i_2}} \cdots g_{v_{i_k}}$ in the generators of $G(\Gamma)$, where the sequence $v_{i_1}, \dots, v_{i_k}$ corresponds to the order of intersections with codimension~$1$ strata.

\medskip
\textbf{Step 3. Well-definedness under homotopies: involutions.}

Consider a homotopy of loops in which a tangency with a codimension~$1$ stratum occurs. Such a tangency can be locally deformed into a pair of transverse intersections occurring consecutively and having opposite directions. The corresponding word contains a fragment of the form $g_v g_v$. By the relation $g_v^2 = 1$, this fragment is trivial in $G(\Gamma)$. Hence, tangencies that produce pairs of identical intersections correspond to involutivity relations and do not change the image in $G(\Gamma)$.

\medskip
\textbf{Step 4. Loops around codimension~$2$ strata and hyperedge relations.}

Suppose a homotopy of loops includes a loop around a stratum $S$ of codimension~$2$ corresponding to a hyperedge $e = \{v_1, \dots, v_m\} \in E$. By assumption, in a neighbourhood of $S$, each vertex $v_j$ occurs exactly twice and from opposite sides, reflecting the local combinatorics of the hyperedge.

A local loop around $S$ causes the loop to intersect transversely the strata corresponding to the vertices $v_1, \dots, v_m$, with each such vertex occurring twice. The sequence of intersections corresponds to some permutation $v_{\sigma(1)}, \dots, v_{\sigma(2m)}$ in which each vertex occurs exactly twice.

Since $g_v^2 = 1$, each double occurrence of $g_v$ cancels, and the essential information is contained in the order of first appearances of the vertices. A more detailed analysis of the local monodromy shows that such a loop gives a relation of the form
\[
    (g_{v_1} g_{v_2} \cdots g_{v_m})^2 = 1,
\]
which is exactly the defining relation of the group $G(\Gamma)$ for the hyperedge $e$.

Consequently, homotopies associated with loops around codimension~$2$ strata correspond to relations of the group $G(\Gamma)$.

\medskip
\textbf{Step 5. Universality and factorization.}

Since all homotopies of loops in $\mathcal{M}$ reduce to combinations of tangencies (giving $g_v^2=1$) and loops around codimension~$2$ strata (giving $(g_{v_1}\cdots g_{v_m})^2=1$), the map assigning to a loop a word in the generators of $G(\Gamma)$ descends correctly to a homomorphism
\[
    \Phi\colon \pi_1(\mathcal{M}, *) \to G(\Gamma).
\]
Well-definedness follows from the fact that all relations in $\pi_1(\mathcal{M},*)$ (generated by homotopies) are reflected in the relations of the group $G(\Gamma)$.

Thus, $\Phi$ is the required natural homomorphism from the fundamental group to $G(\Gamma)$.
\end{proof}

\section{Groups $G(\Gamma)$ for graphs $\Gamma$ with transitive action of the symmetric group $S_{n}$}

In this section, we consider hypergraphs $\Gamma$ whose vertices correspond to certain subsets of $\{1,\dots,n\}$, and whose hyperedges correspond to certain sets of vertices, with the symmetric group $S_n$ acting naturally on vertices and hyperedges. These hypergraphs correspond to configuration spaces of $n$ ``equal'' particles, and the action of the symmetric group is induced by permutations of elements of the base set $\{1,\dots,n\}$ and is assumed to be transitive on vertices and on each type of hyperedge separately.

The simplest examples of this theory are the standard $G_{n}^{3}$ and $G_{n}^{4}$ theories for ordinary braids in the plane, corresponding to codimension~$1$ events: ``three points lie on a line'' and ``four points lie on a circle/line'' A number of other examples are given in the book~\cite{manturov2020}, for instance, when one considers configurations of $n$ points in Euclidean (or projective) $(k-1)$-dimensional space such that any $k-1$ points are in general position. Considering the property ``$k$ points lie on a $(k-2)$-dimensional plane'' leads to a homomorphism from the corresponding group to the group $G_{n}^{k}$.

It is worth noting that the groups $G_n^2$ studied in \cite{manturov2017} are closely related to Coxeter groups. The standard reference for the theory of Coxeter groups is Humphreys' book \cite{humphreys1990}, which provides a comprehensive treatment of reflection groups and their Coxeter presentations. The connection between $G_n^2$ and Coxeter groups established by Manturov \cite{manturov2017} reveals an important algebraic structure underlying the more general $G(\Gamma)$ construction.

Formally, let $V$ be the vertex set of the hypergraph $\Gamma$, and suppose $S_n$ acts transitively on $V$. Similarly, let the set of hyperedges $E$ be partitioned into types $E = E_1 \cup \dots \cup E_r$, with the action of $S_n$ also transitive on each $E_i$. Then the group $G(\Gamma)$ constructed from $\Gamma$ according to the definition in the previous section inherits this action: automorphisms of the hypergraph induced by $S_n$ determine automorphisms of the group $G(\Gamma)$.

A particular example of such a construction is the group $G_{n}^{k}$, which will be discussed further. In this construction, vertices correspond to $k$-element subsets of $\{1,\dots,n\}$, and the symmetric group $S_n$ acts transitively on them, while the types of hyperedges (pair edges and $(k+1)$-cliques) are also orbits of the $S_n$-action.

\section{Spherical braids and stratification of the moduli space}

Consider the moduli space of configurations of $n$ distinct points on the sphere:
\[
    \mathcal{M}_n(S^2) = \mathrm{Conf}_n(S^2)/\mathrm{PGL}(2,\mathbb{C}),
\]
where $\mathrm{Conf}_n(S^2)$ is the space of ordered $n$-tuples of distinct points on $S^2$, and the action of $\mathrm{PGL}(2,\mathbb{C})$ corresponds to fractional-linear transformations (automorphisms of the sphere).

Identify $S^2 \setminus \{\infty\}$ with $\mathbb{R}^2$ via stereographic projection. Then a typical configuration is $n$ points in $\mathbb{R}^2$, no three of which are collinear.

We introduce a stratification of $\mathcal{M}_n(S^2)$:
\begin{itemize}
    \item \textbf{Main stratum (codimension~$0$):} configurations in which all $n$ points lie in $\mathbb{R}^2$ and no three are collinear.
    \item \textbf{Codimension~$1$ stratum:} configurations in which exactly one triple of points $(i,j,k)$ is collinear, while the remaining points are in general position.
    \item \textbf{Higher codimension strata:}
        \begin{enumerate}
            \item One point ``goes to infinity'' (does not lie in $\mathbb{R}^2$).
            \item There is a quadruple of points lying on a line in ${\mathbb R}^{2}$.
            \item There are two disjoint or intersecting triples of collinear points.
        \end{enumerate}
\end{itemize}

\subsection{Stratification hypergraph and the group $G(\Gamma_n^{\mathrm{sph}})$}

We construct the hypergraph $\Gamma_n^{\mathrm{sph}}$ encoding this stratification:
\begin{itemize}
    \item \textbf{Vertices:} triples of indices $\{i,j,k\}$, $1 \le i<j<k \le n$, corresponding to possible triples of collinear points (analogous to vertices for $G_n^3$).
    \item \textbf{Hyperedges:}
        \begin{enumerate}
            \item Ordinary hyperedges for the group $G_n^3$: $(k+1)$-cliques and pair edges according to intersection conditions.
            \item \textit{New hyperedges:} for each fixed index $i$, consider all triples containing $i$. This hyperedge contains all vertices of the form $\{i,j,k\}$ for $j,k \ne i$. Their number is $\binom{n-1}{2} = \frac{(n-1)(n-2)}{2}$.
        \end{enumerate}
\end{itemize}
Thus, the hypergraph contains both ``local'' relations (as in $G_n^3$) and ``global'' constraints related to the projective geometry of $S^2$.

\begin{definition}[Group $G(\Gamma_n^{\mathrm{sph}})$]\label{def:G_sph}
The group $G(\Gamma_n^{\mathrm{sph}})$ is defined by generators $a_{ijk}$ corresponding to vertices $\{i,j,k\}$, and relations:
\begin{enumerate}
    \item $a_{ijk}^2 = 1$ for all triples $\{i,j,k\}$.
    \item For any hyperedge and any permutation of its vertices $(v_1,\dots,v_m)$, we have $(a_{v_1}\cdots a_{v_m})^2 = 1$.
    \item In particular, for the ``new'' hyperedges corresponding to a fixed $i$, we have the relation:
    \[
        \left(\prod_{\substack{j<k\\ j,k\ne i}} a_{ijk}\right)^2 = 1,
    \]
    where the product is taken over all pairs $j,k$ different from $i$.
\end{enumerate}
\end{definition}

These relations reflect the fact that when a point $i$ is fixed, all possible collinearities involving $i$ are interconnected through the projective geometry of the sphere.

\subsection{Monodromy theorem for spherical braids}

A spherical braid on $n$ strands is a loop in the space $\mathcal{M}_n(S^2)$. Considering monodromy along loops yields an action on combinatorial objects encoded by the hypergraph.

\begin{theorem}[Monodromy of spherical braids]\label{thm:sph_monodromy}
There exists a homomorphism
\[
    \Phi\colon \pi_1(\mathcal{M}_n(S^2), *) \to G(\Gamma_n^{\mathrm{sph}}),
\]
which assigns to a spherical braid the class of a loop in the moduli space with the given stratification. The map $\Phi$ is well-defined and gives an invariant of spherical braids.
\end{theorem}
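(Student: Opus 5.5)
The plan is to deduce Theorem~\ref{thm:sph_monodromy} from the general theorem of Section~3, the one giving a homomorphism $\pi_1(\mathcal{M},*)\to G(\Gamma)$. To do this we check that the stratification of $\mathcal{M}_n(S^2)$ introduced above satisfies its hypotheses for the hypergraph $\Gamma_n^{\mathrm{sph}}$. Concretely, I would first fix a basepoint $*$ in the main stratum. I would then put a representative of a spherical braid in general position using the Step~1 transversality argument. Working in the chart $S^2\setminus\{\infty\}\cong\mathbb{R}^2$, one uses the $\mathrm{PGL}(2,\mathbb{C})$-action to move $\infty$ away from the configuration at each moment. Since $\mathrm{PGL}(2,\mathbb{C})$ is connected, a path in $\mathrm{Conf}_n(S^2)$ projects to a path in the quotient. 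The word $w(\gamma)=\prod a_{i_s j_s k_s}$ is then read off from the successive passages through the codimension~$1$ walls ``$i,j,k$ collinear''.

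The second step is the local analysis of codimension~$2$ strata, which must produce exactly the defining relations of Definition~\ref{def:G_sph}.
\begin{enumerate}
\item Two disjoint triples, or two triples sharing one index, give a transverse double point of two walls. A small loop around it crosses each wall twice, from opposite sides, and yields the commutation relations.
\item Four points on a line give the standard $G_n^3$ tetrahedron relation $(a_{ijk}a_{ijl}a_{ikl}a_{jkl})^2=1$. This is the classical local picture: a small loop in the $2$-parameter transversal slice meets the four walls twice each, in reversed order.
\item The remaining stratum is ``point $i$ goes to $\infty$''. This is the new phenomenon. Once $i=\infty$, every line through $i$ becomes a line through $\infty$, i.e.\ a circle through $\infty$ on $S^2$. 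The collinearity conditions $\{i,j,k\}$ for all $j,k$ then degenerate simultaneously.
\end{enumerate}
For the third case I would take a small disc in the normal slice to this stratum, parametrised by the position of $x_i=R e^{i\theta}$ with $R\gg 0$. As $\theta$ runs once around, the line through $x_i$ and $x_j$ has direction approximately $e^{i\theta}$. It therefore sweeps through every other point $x_k$ exactly twice, once in each direction, over the half-turns. The loop thus crosses each wall $\{i,j,k\}$ twice, from opposite sides, and the ordering of the crossings over the second half-turn repeats that of the first. This gives $(\prod a_{ijk})^2=1$, with the order of the product determined by the slopes of the segments $x_jx_k$. Varying the configuration of the other points realises the different orderings, which is why the definition imposes the relation for every permutation.

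The third case is the step I expect to be the main obstacle. It is not obvious that ``a point at infinity'' is genuinely a codimension~$2$ phenomenon in the quotient by $\mathrm{PGL}(2,\mathbb{C})$, since $\infty$ is not a distinguished point there. One must check that this stratum and the walls are well defined modulo the group action, and that the loop about it is null-homotopic in $\mathcal{M}_n(S^2)$, i.e.\ bounds a disc. I would try to handle this by working in $\mathrm{Conf}_n(\mathbb{C})$ with a fixed chart. There I would argue that the extra generators of $\pi_1$ of the sphere (the full-twist/``point passes behind the sphere'' relations) are exactly the loops above, which get killed. Finally, since every homotopy in general position decomposes into wall tangencies and passages around these three kinds of codimension~$2$ strata, the general theorem applies and $\Phi$ is well defined. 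It is an invariant of spherical braids because it depends only on the homotopy class.
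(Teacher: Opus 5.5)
\textbf{Verdict: there is a genuine gap, in the passage to the quotient by $\mathrm{PGL}(2,\mathbb{C})$, but it is easily fixed.}

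Your outline is the paper's: apply the Section~3 theorem, then identify the local relations at the codimension-$2$ strata. Your case~3 computation supplies the local picture behind the ``global'' relation, which the paper only asserts. Your last paragraph does not close the quotient step.

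\textbf{The walls do not descend to the quotient.} The check you propose, that the walls and the stratum $\{x_i=\infty\}$ are well defined modulo the group action, fails. A M\"obius map sending $x_i,x_j,x_k$ to $0,1,t$, with $t$ a generic real number, shows that every $\mathrm{PGL}(2,\mathbb{C})$-orbit meets every wall $W_{ijk}$. Every orbit also meets every set $\{x_i=\infty\}$. So none of these sets descends to $\mathcal{M}_n(S^2)$, and the Section~3 theorem cannot be applied on $\mathcal{M}_n(S^2)$ itself. The paper's proof simply asserts that working on the quotient takes care of the projective symmetries, so your doubt is justified.

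\textbf{Your fallback works upstairs.} Fixing the chart and working in $\mathrm{Conf}_n(S^2)$ is the right move, and there your three local pictures suffice. $\mathrm{Conf}_n(\mathbb{C})$ is the complement in $\mathrm{Conf}_n(S^2)$ of the disjoint codimension-$2$ submanifolds $\{x_i=\infty\}$. Hence the kernel of $\pi_1(\mathrm{Conf}_n(\mathbb{C}))\to\pi_1(\mathrm{Conf}_n(S^2))$ is normally generated by their meridians. Case~3 sends each meridian to a conjugate of $\bigl(\prod a_{ijk}\bigr)^2$ in slope order, which is trivial in $G(\Gamma_n^{\mathrm{sph}})$. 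This gives a homomorphism $\Phi'\colon\pi_1(\mathrm{Conf}_n(S^2))\to G(\Gamma_n^{\mathrm{sph}})$, but not yet one on $\pi_1(\mathcal{M}_n(S^2))$.

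\textbf{The missing step is the fibre.} $\mathrm{Conf}_n(S^2)\to\mathcal{M}_n(S^2)$ is a principal $\mathrm{PGL}(2,\mathbb{C})$-bundle and $\pi_1(\mathrm{PGL}(2,\mathbb{C}))=\Z_2$. So $\pi_1(\mathcal{M}_n(S^2))$ is $\pi_1(\mathrm{Conf}_n(S^2))$ modulo the full twist $\Delta^2$.

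Equivalently, the word you read off after ``moving $\infty$ away at each moment'' depends a priori on the lift. Two closed lifts of one loop differ by a loop $g(t)$ in $\mathrm{PGL}(2,\mathbb{C})$, i.e.\ by $\Delta^2$ or by $1$ in $\pi_1$.

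$\Delta^2$ has order $2$ in the spherical pure braid group. So it is \emph{not} a consequence of the meridian relations, and it is not ``among the loops above'' as your final paragraph says.

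You need the separate observation that $\Phi'(\Delta^2)=1$. Represent the generator of $\pi_1(\mathrm{PGL}(2,\mathbb{C}))$ by $z\mapsto e^{\sqrt{-1}\,\theta}z$, $0\le\theta\le 2\pi$. Its orbit through a generic configuration is a rigid rotation of the plane fixing $\infty$. This preserves collinearity, so the loop never leaves the top stratum and its word is empty. Then $\Phi'$ descends to $\pi_1(\mathcal{M}_n(S^2))$; homotopies lift because the bundle is trivial.

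\textbf{Two minor slips.}
In case~2 the second pass through the four walls is in the \emph{same} cyclic order with reversed crossing directions, exactly as in case~3. A reversed order would give a word already trivial by involutivity.
Not every permutation occurs as a slope order of a point configuration. This is harmless, since extra relations in the target do not affect well-definedness.
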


\begin{proof}
The construction of $\Phi$ is based on the monodromy rule:
\begin{itemize}
    \item An intersection of the loop with a codimension~$1$ stratum corresponding to a triple $\{i,j,k\}$ yields the generator $a_{ijk}$.
    \item The order of generators is determined by the order of intersections along the loop.
\end{itemize}

The well-definedness of $\Phi$ follows from the fact that homotopies of loops are reflected in the relations of the group $G(\Gamma_n^{\mathrm{sph}})$. Namely:
\begin{itemize}
    \item A loop around the intersection of two strata leads to the relation $(a_u a_v)^2=1$, which is accounted for in the definition of hyperedges.
    \item Homotopies associated with codimension~$2$ strata correspond to relations on hyperedges, including the ``global'' relations for fixed indices $i$.
    \item Projective symmetries (the action of $\mathrm{PGL}(2,\mathbb{C})$) are accounted for because the stratification is constructed on the quotient space $\mathcal{M}_n(S^2)$, and monodromy is considered precisely in this space.
\end{itemize}

Thus, any homotopy of loops in $\mathcal{M}_n(S^2)$ leads to equivalent words in $G(\Gamma_n^{\mathrm{sph}})$, and $\Phi$ is a well-defined homomorphism from the fundamental group to $G(\Gamma_n^{\mathrm{sph}})$.
\end{proof}

\begin{remark}
Note that this map is a homomorphism,
since the moduli space is ordered, and the braids are pure.
For ordinary braids, the same construction
gives a well-defined
map, but the product of braids does not correspond
to the product of words due to the permutation of
points.
\end{remark}

\subsection{Comparison with the group $G_n^3$}

The group $G(\Gamma_n^{\mathrm{sph}})$ is an extension of $G_n^3$ that takes into account the specifics of spherical geometry. A comparison table is given below.

\begin{table}[h]
    \centering
    \begin{tabular}{lll}
        \hline
        \textbf{Object} & \textbf{$G_n^3$} & \textbf{$G(\Gamma_n^{\mathrm{sph}})$} \\ \hline
        Vertices & Triples $\{i,j,k\}$ & Triples $\{i,j,k\}$ \\
        Local hyperedges & $(k+1)$-cliques, pair edges & Same + additional \\
        Global relations & Absent & Hyperedge for each $i$: all triples with $i$ \\
        Geometric meaning & Planar configurations & Configurations on $S^2$ with projective constraints \\ \hline
    \end{tabular}
\end{table}

Thus, $G(\Gamma_n^{\mathrm{sph}})$ takes into account the projective symmetries of the sphere and allows one to distinguish spherical braids that would look identical in the plane.

\section{Why these groups are interesting}

Groups of the form $G(\Gamma)$ and their special cases $G_n^k$ are interesting from several points of view: topological, combinatorial, and algorithmic.

First, algebraically, as we have seen above,the groups $G_{n}^{k}$ and their generalisations defined
above admit
 {\em homomorphic maps to
free products of groups
$\Z_{2}$}.
These are constructed using the so-called
MN-indices (see
\cite{manturovnikonov2015}).

The groups $\Z_{2}*\cdots *\Z_{2}$
are convenient because they are sufficiently
large (have exponential growth)
and have easily solvable word
problems and conjugacy problems.

Thus, in all the cases above, we obtain powerful invariants
that are easy to compare.

From a topological point of view, the groups $G(\Gamma)$ arise as natural images of fundamental groups of stratified moduli spaces under a map that takes into account the local combinatorics of codimension~$1$ and~$2$ strata. Such a map encodes the monodromy of the configuration system: each intersection of a loop with a codimension~$1$ stratum corresponds to passing through a degenerate configuration (e.g., collinearity of three points or concyclicity of four), while loops around codimension~$2$ strata reflect relations arising from simultaneous degeneracy of several conditions. Thus, $G(\Gamma)$ serves as a discrete ``skeleton'' of the topology of the moduli space, preserving key homotopy invariants.

An important advantage is the ability to construct explicit homomorphisms from fundamental groups of configuration spaces to $G(\Gamma)$, yielding computable invariants for braids and their generalizations. For example, the map from the spherical braid group to $G(\Gamma_n^{\mathrm{sph}})$ allows one to distinguish elements of the braid group by how they intersect strata of collinearity and projective constraints.

Moreover, the groups $G(\Gamma)$ admit an action of the symmetric group $S_n$ when the hypergraph $\Gamma$ is constructed from equal particles. This action is compatible with the natural symmetry of the configuration space and allows the use of representation theory and invariant methods in studying the structure of the group.

It looks plausible that this approach
will work for study of other moduli
spaces (say, Grassman spaces).

\section{Open problems}

Within the framework of the theory of groups $G(\Gamma)$, a number of important open questions remain, concerning topology, algebra, and computability.

\begin{problem}
The condition formulated above
that codimension 1 strata are attached to codimension 2 strata
{\em exactly twice and from opposite sides}
is very natural and general.

As we see, it generalises the $G_{n}^{k}$-condition and holds in many other cases (say, for spherical braids).

It would be extremely interesting to understand to which extent this condition is typical in stratification which happens in algebraic geometry.

The most intriguing seems to be
when we consider an abelian manifold.
say, an elliptic curve and a set of
codimension $1$ relations in terms of the group operations (say, $2a+b=c$).

Will this satisfy the condition above?
If yes, what will the graph and the group look like?

\end{problem}

\begin{problem}
Study the structure of the kernel of the map $\Phi\colon \pi_1(\mathcal{M}, *) \to G(\Gamma)$. In particular, determine under which conditions on the stratification and the hypergraph $\Gamma$ this map is surjective, and under which it has a nontrivial kernel.
\end{problem}

\begin{problem}
Investigate the algorithmic decidability of the word problem in the groups $G(\Gamma)$ for various classes of hypergraphs.
\end{problem}

\begin{problem}
Understand for which spaces
the fundamental groups coincide
with the groups defined above
(rather than merely mapping into them).
\end{problem}

\begin{problem}
Construct and study fine invariants of elements of the group $G(\Gamma)$ that take into account not only intersections with codimension~$1$ strata, but also higher strata (codimension $\ge 3$). In particular, consider the possibility of introducing a filtration by codimension and the corresponding graded objects.
\end{problem}

\begin{problem}
For the spherical case $G(\Gamma_n^{\mathrm{sph}})$, determine whether there exists a homomorphism from the spherical braid group to $G(\Gamma_n^{\mathrm{sph}})$ that is an embedding (or at least has trivial kernel) for sufficiently large $n$.
\end{problem}

\begin{problem}
As we see, the groups provide many
invariants of {\em braids.} How can we obtain
from them invariants of {\em knots}?
\end{problem}

\begin{problem}
How can one study objects from
algebraic geometry using
the groups described above,
if the strata in the moduli
space are determined by an algebraic structure (e.g., addition of points on an elliptic curve)?
\end{problem}

\begin{problem}
Determine how properties of the hypergraph $\Gamma$ (e.g., its symmetries, connectivity, presence of cycles) are reflected in properties of the group $G(\Gamma)$: abelianization, centre, torsion, growth, etc.
\end{problem}

\begin{problem}
Understand how one can use the study of
spherical braids for understanding
non-stable homotopy groups of
spheres ($\pi_{n}(S^{2})$).
\end{problem}

Solving these problems will not only deepen our understanding of the structure of the groups $G(\Gamma)$, but also expand their scope of application in low-dimensional topology, braid theory, and algebraic geometry.


\begin{thebibliography}{9}

\bibitem{manturov2020} V.O. Manturov,
D.A. Fedoseev, S. Kim, I.M. Nikonov:
{\em Invariants and Pictures:
Low-dimensional topology and
combinatorial group theory},
World Scientific, 2020.

\bibitem{manturov2015}
V.O. Manturov, ``Non-Reidemeister knot theory and its applications in dynamical systems, geometry, and topology'', arXiv:
1501.05208.

\bibitem{manturovnikonov2015}
V.O. Manturov, I.M. Nikonov,
``On Braids and Groups 
$G_{n}^{k}$'', Journal of Knot Theory and Its Ramifications (Vol.~24, No.~13, 2015).

\bibitem{manturov2017}
V.O. Manturov,
``The groups $G_n^2$ and Coxeter groups'',
Russian Math. Surveys, 72:2 (2017), 378--380.

\bibitem{arnold1969}
V.I. Arnold,
``The cohomology ring of the colored braid group'',
Mat. Zametki (Math. Notes), 5:2 (1969), 227--231.

\bibitem{birman1975}
J.S. Birman,
{\em Braids, Links, and Mapping Class Groups},
Princeton University Press, 1975.
(Annals of Mathematics Studies, vol.~82).

\bibitem{humphreys1990}
J.E. Humphreys,
{\em Reflection Groups and Coxeter Groups},
Cambridge University Press, 1990.
(Cambridge Studies in Advanced Mathematics, vol.~29).

\end{thebibliography}
\end{document}